
%
\documentclass[12pt]{article}
\usepackage[symbol]{footmisc}
\usepackage[misc]{ifsym}
\usepackage{mathtools}
\usepackage{amsmath}
\usepackage{amssymb}
\usepackage{amsthm}
\usepackage{mathrsfs}
\usepackage{hyperref}
\usepackage{graphicx}
\usepackage{caption}
\usepackage{subcaption}
\usepackage{xcolor}
\usepackage{listings}
 \usepackage[english]{babel}

\usepackage{natbib}

\setlength{\oddsidemargin}{0in}
\setlength{\evensidemargin}{0in}
\setlength{\topmargin}{-.5in}
\setlength{\headsep}{0in}
\setlength{\textwidth}{6.5in}
\setlength{\textheight}{8.5in}


\numberwithin{equation}{section}
\begin{document}

\noindent {STRONG LAW OF LARGE NUMBERS FOR FUNCTIONALS OF RANDOM FIELDS WITH UNBOUNDEDLY INCREASING COVARIANCES}
\vskip 3mm

\vskip 5mm
\noindent Illia Donhauzer\footnote[2]{La Trobe University, Melbourne, Australia. email: I.Donhauzer@latrobe.edu.au}  Andriy Olenko\footnote[3]{ \Letter \ La Trobe University, Melbourne, Australia. email: a.olenko@latrobe.edu.au}  Andrei Volodin\footnote[4]{University of Regina, Saskatchewan, Canada. email: andrei.volodin@uregina.ca}

\vskip 3mm
\noindent Key Words: strong law of large numbers, random field, integral functional, non-stationary, functional data, long-range dependence, non-central limit theorem.
\vskip 3mm

\noindent ABSTRACT

The paper proves the Strong Law of Large Numbers for integral functionals of random fields with unboundedly increasing covariances. The case of functional data and increasing domain asymptotics is studied.  Conditions to guarantee that the Strong Law of Large Numbers holds true are provided. The considered scenarios include wide classes of  non-stationary random fields. The discussion about application to weak and long-range dependent random fields and numerical examples are given.
\vskip 4mm

\renewcommand{\qedsymbol}{$\blacksquare$}

\theoremstyle{definition}
\newtheorem{definition}{Definition}

\theoremstyle{definition}
\newtheorem{assumption}{Assumption}

\theoremstyle{definition}
\newtheorem{lemma}{Lemma}

\theoremstyle{definition}
\newtheorem{theorem}{Theorem}

\theoremstyle{definition}
\newtheorem{remark}{Remark}

\theoremstyle{definition}
\newtheorem*{example}{Example}

\section{Introduction}

The recent evolution of technology and measuring tools provided a tremendous amount of functional data and substantively motivated a development of new
statistical models and methods. Numerous classical results that originally were obtained for discretely sampled data require extensions to new types of observations collected as functional curves, raster images or spatial data. These high-dimensional data structures often do  not have such nice properties as stationarity or uniform boundedness of their moment characteristics.

The aim of this paper is to derive the Strong Law of Large Numbers (SLLN) for functional data in $\mathbb{R}^d,d\geq 1,$ under not restrictive assumptions on moments and dependencies between observations. We consider realizations of random fields $X(s),\ s\in\mathbb{R}^d,\ d\geq1,$ (not necessary homogeneous and isotropic) with weak restrictions on their covariance functions $B(s_1,s_2),\ s_1,s_2\in \mathbb{R}^d.$ Namely, the covariance functions $B(s_1,s_2)$  can unboundedly increase, as $||s_1||,||s_2||\to\infty,$ when the distance $||s_1-s_2||$ between the locations $s_1,s_2$ is preserved bounded. By this condition, the variances $Var(X(s))$ are not necessary uniformly bounded on $\mathbb{R}^d.$  Moreover, as we will see later, this condition allows to consider random fields with long-range dependence and their nonlinear transformations.

We are interested in the asymptotic behaviour of

$$\xi(\mu) = \frac{1}{\mu^{d}} \int\limits_{\Delta(\mu)}X(s)ds, \ \mbox{when} \ \mu\to\infty,$$ where $\Delta(\mu)$ is a homothetic transformation with the coefficient $\mu$ of a set $\Delta\in\mathbb{R}^d$ which plays a role of an observation window in statistical applications. For the case of identically distributed $X(s)$ the statistic $\xi(\mu)$ is a classical estimator of the mean. This paper studies the case of correlated observations and conditions on the behaviour of the covariance function $B(s_1,s_2)$ that guarantee the convergence $\xi(\mu)\xrightarrow{a.s.}0, \mu\to\infty.$

In \cite{lyons1988strong} SLLN was obtained for sequences of weakly dependent random variables $\{X(n), \ n \geq 1\}$ such that $Var(X(n)) = O(1).$ Other results on the SLLN can be found in \cite{moricz1977strong},  \cite{moricz1985slln}, \cite{serfling1970convergence}.

The multidimensional versions of the results were obtained in \cite{dokl1975criteria} and \cite{parker2019strong}. \cite{parker2019strong} proved the SLLN for 2-dimensional arrays  $\{X(n,m), \ n \geq 1, \ m \geq 1\}$ of independent random elements with values in Banach spaces.  In \cite{dokl1975criteria}, the multidimensional SLLN for stationary random processes and homogeneous random fields was established. Necessary and sufficient conditions for the SLLN were found.

Later \citet*{hu2005golden}  proved the SLLN for sequences of random variables $\{X(n), \ n \geq 1\}$ with less restrictive conditions on moments and dependencies between observations comparing with the results in \cite{lyons1988strong}. More precisely, they studied the case of
\[Var(X(n))<H(n), \ n\geq 1,$$ $$\sum_{n=1}^\infty\frac{H(n^\varphi)}{n^2}<\infty,\] 
where $\varphi$ is a golden ratio, and used the dependency assumption 
\[\sup_{n}|cov(X(n),X(n+m))|\leq\rho(m),\]  
\[\sum_{m=1}^{\infty}\frac{\rho(m)}{m^{\varphi-1}}<\infty.\] 
From these conditions, one may see that there are sequences of random variables possessing long-range dependence that satisfy SLLN. 

The results presented in \citet*{hu2005golden} were extended by the same authors to the case of more general normalization of partial sums in \citet*{MR2458009}. These results found numerous applications. We provide a few examples that illustrate areas that employed such results:\\
- in \citet*{MR3849560},  the results from \citet*{hu2005golden} were applied to an investigation of wireless massive multiple-input multiple-output system entails a large number of base station antennas serving a much smaller number of users, with large gains in spectral efficiency and energy efficiency; \\
- in \citet*{li2017efficient}, to data lying in a high dimensional ambient space that commonly thought to have a much lower intrinsic dimension; \\
- in \cite{MR3270987}, to a study of a Bayesian multichannel change-point detection problem in a general setting; \\
- in \cite{shu2019estimation}, to the estimation of large covariance and precision matrices from high-dimensional sub-Gaussian or heavier-tailed observations with slowly decaying temporal dependence; \\
- in \citet*{MR3959671}, to show that stochastic programming provides a framework to design hierarchical model predictive control schemes for periodic systems; \\
- in \citet*{pumi2019dynamic}, to a class of dynamic models for time series taking values on the unit interval;\\
- in \citet*{MR3997647}, to a recurring theme in modern statistics that is dealing with high-dimensional data whose main feature is a large number of variables but a small sample size;\\
- in \cite{vega2013stochastic}, to a construction of adaptive algorithms that leads to the so called Stochastic Gradient algorithms; \\
- in \citet*{hojjatinia2020identification}, to introduce novel methodologies for the identification of coefficients of switching autoregressive moving average with exogenous input systems and switched autoregressive exogenous linear models.

The main novelties of the obtained in this paper results are in investigating
\begin{itemize}
\item[--]  case of random fields with multidimensional observation windows;
\item[--] case of functional data and their integral functionals;
\item[--] weaker conditions on variances and dependencies then in the above publications;
\item[--] weakly and strongly dependent random fields.
\end{itemize}

Specifically, the conditions on the variances and covariance functions are relaxed. The variances are bounded by the functions $H(s)$ which can increase faster than in \citet*{hu2005golden}, as $||s||\to\infty.$ The covariance functions can unboundedly increase, when the locations are getting far away from the origin, but a distance between them is bounded. We show that wide classes of long-range dependent random fields and their nonlinear transformations satisfy the SLLN. Moreover, observation windows $\Delta$ can be taken from a wide class of sets.

This paper is organized as follows. Section~\ref{sec2} provides required definitions and notations. The main results of this article are proved in Section~\ref{sec3}. Numerical studies confirming the theoretical findings are given in Section~\ref{sec4}. Conclusions and some open problems are presented in Section~\ref{sec5}.

\section{Definitions and notations}\label{sec2}
The main definitions and notations about random fields are given in this section.

In what follows we use the symbol $C$ to denote constants which are not important for our discussion. Moreover, the same symbol $C$ may be used for different constants appealing in the same proof. Let $\overset{d}{=}$ denote the equality of finite-dimensional distributions.

For $\nu>-\frac{1}{2}$, we denote by $$J_{\nu}(z) = \sum_{m = 0}^{\infty}\frac{(-1)^m(z/2)^{2m+\nu}}{m!\Gamma(m+\nu+1)}, \ z\geq 0,$$ the Bessel's function of the first kind of order $\nu$, where $z \geq 0.$

\begin{definition} A random field $X(s), \ s \in \mathbb{R}^n ,$ is called strictly homogeneous, if finite-dimensional distributions of $Y(s)$ are invariant with respect to the group of motion transformations
$$P(X(s_1)<a_1, ...,X(s_k)<a_k) = P(X(s_1+h)<a_1,...,X(s_k+h)<a_k)$$  for all $h \in \mathbb{R}^n,$ where $s_1,..,s_k\in\mathbb{R}^n$ and $a_1,..,a_k\in\mathbb{R}.$
\end{definition}

\begin{definition} A random field $X(s), \ s \in \mathbb{R}^n,$ is called isotropic, if its finite-dimensional distributions are invariant with respect to rotation transformations $$P(X(s_1)<a_1,...,X(s_k)<a_k) =\\ P(X(As_1)<a_1,...,X(As_k)<a_k)$$ for all rotation transformations, i.e. orthogonal matrices $A$ with the absolute value of determinant of $A$ equals to 1,  $s_1,..,s_k\in\mathbb{R}^n$ and $a_1,..,a_k\in\mathbb{R}^d.$
\end{definition}

Let $X(s)$ be a constant mean random field defined on $\mathbb{R}^d, d \geq 1.$ Without loss of generality, let $EX(s)=0.$

 The function $B(r), \ r\geq0,$ is a correlation function of an isotropic random field if and only if there exists a measure $G$ on $\{ \mathbb{R}_{+}, \mathbb{B}_{+} \}$ such that $B(r)$ allows the following integral representation
$$B(r) = E\big( X(s_1)X(s_2) \big) = 2^\frac{n-2}{2}\Gamma\bigg(\frac{n}{2}\bigg) \int_{\mathbb{R}_{+}} J_{\frac{n-2}{2}}(ur)(ur)^{\frac{2-n}{2}}G(du), \ r=||s_1-s_2||, $$ where $||\cdot||$ denotes the Euclidean distance in $\mathbb{R}^d.$

\begin{definition} A measurable function $L:(0,\infty) \to (0,\infty)$ is called slowly varying at the infinity if for all $\lambda>0$
$$\lim_{t \to \infty} \frac{L(\lambda t )}{L(t)} = 1.$$
\end{definition}

\begin{definition} The function
$$H_m(u)=(-1)^me^{u^2/2}\frac{d^m}{du^m}e^{-\frac{u^2}{2}}$$ is a Hermite polynomial of order $m$. The first few Hermite polynomials are
$$H_0(u) = 1, \ H_1(u) = u, \ H_2(u) = u^2-1.$$
\end{definition}

It is known that the Hermite polynomials form a complete orthogonal system in the space $L_2(\mathbb{R}, \phi(u)du)$, i.e.
$$\int_{\mathbb{R}}H_{m_1}(u)H_{m_2}(u)\phi(u)du = \delta_{m1}^{m_2}m_1!,$$ where $\delta_{m_1}^{m_2}$ is a Kronecker delta function.

\begin{definition}
\label{long}
A random field $X(s),s\in\mathbb{R}^d,$ is called long-range dependent if its covariance function $B(s_1,s_2) = E(X(s_1)X(s_2))$ is not absolutely integrable for each $s_1\in \mathbb{R}^d,$ i.e.
$$\int\limits_{\mathbb{R}^d}|B(s_1,s_1+s_2)|ds_2 = +\infty.$$
\end{definition}

\section{Main results}\label{sec3}

In this section we introduce dependencies assumptions and prove the SLLN for random fields.

\begin{assumption}\label{assump1} The absolute value of the covariance function of $X(s)$ is bounded as
$$|B(s_1,s_2)| = |cov(X(s_1), X(s_2))|\leq C(1+||s_1||^\gamma+||s_2||^\gamma)\rho(||s_1-s_2||), \ \gamma \ge 0,$$ where $\rho(u), u\in\mathbb{R}_{+},$ is a such positive bounded function  that for some $\beta > 0$ it holds $\rho(u)\leq 1/u^{\beta}, \ u\geq 1.$

\end{assumption}

Then the variance of $X(s)$ is bounded by the function
$$H(s): = C(1+2||s||^\gamma)\rho(0),$$  which can unboundedly increase, when $||s||\to\infty.$

\begin{remark}
It follows from Assumption \ref{assump1} that for any fixed $s_1\in\mathbb{R}^d$ the covariance function is bounded by
$C(s_1)||s_2||^{\gamma-\beta}$ if $||s_2||\to+\infty$ and
$$\int_{\mathbb{R}^d}B(s_1,s_2)ds_2\leq C\left(1+C(s_1)\int_{||s_2||\geq 1}||s_2||^{\gamma-\beta}ds_2\right)$$ 
$$=C\left(1+C(s_1)\int_{1}^{\infty}r^{d+\gamma-\beta-1}dr\right).$$  Therefore, in the case $\beta-\gamma\geq d$  the random field $X(s)$ is weakly dependent and  in the case $\beta-\gamma <d$  the random field $X(s)$ can have a non-integrable covariance function and be long-range dependent.

\end{remark}

We consider the random variables
\begin{equation}
\label{funct}
\xi(\mu) = \frac{1}{\mu^{d}}\int_{\Delta(\mu)}X(s)ds,\ \mu>0,
\end{equation} 
where $\Delta(\mu)$ is a homothetic transformation with the parameter $\mu$ of a simply connected $d$\mbox{-}dimensional set $\Delta\subset\mathbb{R}^d,$ which is a compact set containing the origin and the Lebesgue measure $|\Delta| > 0$. The integral in \eqref{funct} is well-defined because of the measurability of $X(s)$, see Theorem 1.1.1. in \cite{ivanov2012statistical}.

We will use the notation $\rm{diam}(A)$ for the diameter of the set $A\subset\mathbb{R}^d,$ i.e. $\rm{diam}(A) = \sup_{x,y\in A}||x-y||. $

To establish conditions for $\xi(\mu)\xrightarrow{a.s.}0,$ as $\mu\to\infty,$ we use the classical method of subsequences. By this method, the existance of the increasing subsequence $\{\mu_n, n\geq 1\}$ such that $\xi(\mu_n)\xrightarrow{a.s.}0,$ as $n\to\infty,$  and the convergence of the deviations $\sup_{\mu\in[\mu_n,\mu_{n+1})}|\xi(\mu)-\xi(\mu_n)|\xrightarrow{a.s.}0,$ as $n\to\infty,$ are enough for the convergence $\xi(\mu)\xrightarrow{a.s.}0,$ as $\mu\to\infty.$

\begin{lemma}
\label{lemma1}
 Let Assumption~\ref{assump1} be satisfied and there exist an increasing sequence of positive numbers $\{ \mu_n, n\geq 1\}$ such one of the following conditions holds
\begin{itemize}
\item[(i)]
for $\beta < d$
\begin{equation}
\label{cond1}
\sum_{n=1}^{\infty}\frac{1}{\mu_n^{\beta-\gamma}}<+\infty ,
\end{equation}  
\item[(ii)]  for  $\beta \geq d$
\end{itemize}
\begin{equation}
\label{cond2}
\sum_{n=1}^{\infty}\frac{1}{\mu_n^{d-\gamma}}<+\infty.
\end{equation}  Then, the sequence of random variables $\xi(\mu_n)\xrightarrow{a.s.}0,$ as $n\to\infty.$
\end{lemma}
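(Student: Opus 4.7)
The plan is to apply the Borel--Cantelli lemma through Chebyshev's inequality. Since $EX(s)=0$ and the integral in \eqref{funct} is mean-square well-defined, $E\xi(\mu_n)=0$ and Chebyshev gives $P(|\xi(\mu_n)|>\varepsilon)\le \mathrm{Var}(\xi(\mu_n))/\varepsilon^2$ for every $\varepsilon>0$. It therefore suffices to show that $\sum_{n\ge 1}\mathrm{Var}(\xi(\mu_n))<\infty$, after which Borel--Cantelli yields $\xi(\mu_n)\xrightarrow{a.s.}0$.

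The bulk of the work is the variance estimate. By Fubini,
\[
\mathrm{Var}(\xi(\mu_n))=\frac{1}{\mu_n^{2d}}\int_{\Delta(\mu_n)}\int_{\Delta(\mu_n)} B(s_1,s_2)\,ds_1\,ds_2.
\]
Assumption~\ref{assump1} bounds the integrand by $C\bigl(1+\|s_1\|^\gamma+\|s_2\|^\gamma\bigr)\rho(\|s_1-s_2\|)$, and since $\Delta$ is compact with $0\in\Delta$ we have $\|s\|\le \mu_n\,\mathrm{diam}(\Delta)$ for every $s\in\Delta(\mu_n)$, whence $1+\|s_1\|^\gamma+\|s_2\|^\gamma\le C\mu_n^\gamma$. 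The problem then reduces to bounding
\[
I_n:=\int_{\Delta(\mu_n)}\int_{\Delta(\mu_n)}\rho(\|s_1-s_2\|)\,ds_1\,ds_2.
\]
For fixed $s_1$, the set $\Delta(\mu_n)-s_1$ is contained in a ball of radius $O(\mu_n)$ about the origin; splitting at $\|u\|=1$, using boundedness of $\rho$ on $[0,1]$ together with $\rho(u)\le u^{-\beta}$ for $u\ge 1$, and passing to polar coordinates yields the tail contribution $\int_1^{C\mu_n} r^{d-1-\beta}\,dr$. This is $O(\mu_n^{d-\beta})$ when $\beta<d$ and $O(1)$ when $\beta>d$. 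Multiplying by $|\Delta(\mu_n)|=O(\mu_n^d)$ and combining with the $\mu_n^\gamma$ factor produces
\[
\mathrm{Var}(\xi(\mu_n))\le C\mu_n^{\gamma-\beta}\quad(\beta<d),\qquad \mathrm{Var}(\xi(\mu_n))\le C\mu_n^{\gamma-d}\quad(\beta>d),
\]
both of which are summable under \eqref{cond1} and \eqref{cond2} respectively.

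The main step requiring attention is the uniform-in-$s_1$ estimate for the inner $\rho$-integral, which follows cleanly from the compactness of $\Delta$. The only genuinely delicate case is the borderline $\beta=d$, where polar integration produces an additional logarithmic factor $\log\mu_n$; this must either be assigned to case (i) of the hypothesis or absorbed into the polynomial rate $\mu_n^{-(d-\gamma)}$ used in \eqref{cond2}. Once the variance bounds are in place, Chebyshev plus Borel--Cantelli close the argument.
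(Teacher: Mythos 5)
Your proposal is correct and takes essentially the same route as the paper: Chebyshev's inequality plus Borel--Cantelli, with the variance bounded by extracting the polynomial factor $\mu_n^{\gamma}$ and estimating the $\rho$-integral over a ball of radius $O(\mu_n)$ in polar coordinates, which gives the rates $C\mu_n^{\gamma-\beta}$ for $\beta<d$ and $C\mu_n^{\gamma-d}$ for $\beta\geq d$, summable under \eqref{cond1} and \eqref{cond2}. The borderline logarithmic factor at $\beta=d$ that you flag is in fact silently glossed over in the paper as well (its bound $C+\mu^{d-\beta}$ ignores the $\log\mu$ arising there), so your treatment is no weaker than the published proof.
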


\begin{proof}
Note that as $E(X(s))=0,$ then the variances of $\xi(\mu)$ can be estimated as
$$Var(\xi(\mu)) =\frac{1}{\mu^{2d}} \int_{\Delta{(\mu)}}\int_{\Delta{(\mu)}} cov(X(s_1),X(s_2))ds_1ds_2$$
$$\leq\frac{C}{\mu^{2d}} \int_{\Delta{(\mu)}}\int_{\Delta{(\mu)}}(1+||s_1||^{\gamma}+||s_2||^{\gamma})\rho(||s_1-s_2||)ds_1ds_2$$
$$\leq \frac{C}{\mu^{2d}} \int_{\Delta{(\mu)}}\int_{\Delta{(\mu)}}(1+||s_1||^{\gamma})\rho(||s_1-s_2||)ds_1ds_2+\frac{C}{\mu^{2d}} \int_{\Delta{(\mu)}}\int_{\Delta{(\mu)}}(1+||s_2||^{\gamma})\rho(||s_1-s_2||)ds_1ds_2$$
$$= \frac{2C}{\mu^{2d}} \int_{\Delta{(\mu)}}\int_{\Delta{(\mu)}}(1+||s_1||^{\gamma})\rho(||s_1-s_2||)ds_1ds_2.$$

After the change of the variables $\tilde{s}_1 = s_1, \ \tilde{s}_2 = s_1-s_2$ one gets
$$Var(\xi(\mu)) = \frac{2C}{\mu^{2d}} \int\limits_{\Delta{(\mu)}}\int\limits_{\Delta{(\mu)}-\Delta{(\mu)}}(1+||\tilde{s}_1||^{\gamma})\rho(||\tilde{s}_2||)d\tilde{s}_1\tilde{ds}_2,$$ where $\Delta{(\mu)}-\Delta{(\mu)}: = \{s-\tilde{s}:s,\tilde{s} \in \Delta(\mu) \}$ denotes the Minkowski difference of sets.

The set $\Delta{(\mu)}$ is bounded, so there exists a centered ball $B(\mu \cdot \rm{diam}(\Delta)) = \{ s\in \mathbb{R}^n:||s|| \leq \mu \cdot \rm{diam}(\Delta)\}$ such that $\Delta{(\mu)}-\Delta{(\mu)} \subset B(\mu\cdot \rm{diam}(\Delta)).$ Then, by converting the integrals to the spherical coordinates, it follows from Assumption \ref{assump1} that
$$Var(\xi(\mu)) \leq  \frac{C}{\mu^{2d}} \int\limits_{\Delta{(\mu)}}(1+||\tilde{s_1}||^{\gamma})d\tilde{s_1}\int\limits_{B(\mu\cdot \rm{diam}(\Delta))}\rho(||\tilde{s_2}||)\tilde{ds_2}$$
$$\leq \frac{C}{\mu^{2d}}\int\limits_{0}^{\mu\cdot \rm{diam}{\Delta(1)}}\tilde{r}_1^{d-1}(1+\tilde{r}_1^\gamma)d\tilde{r}_1\int\limits_{0}^{\mu \cdot \rm{diam}(\Delta)}\tilde{r}_2^{d-1}\rho(\tilde{r}_2)d\tilde{r}_2$$
$$\leq \frac{C}{\mu^{2d}}\big(\mu^d + \mu^{d+\gamma} \big)\bigg(C +  \int\limits_{1}^{\mu \cdot \rm{diam}(\Delta)}\tilde{r}_2^{d-1}\rho(\tilde{r}_2)d\tilde{r}_2\bigg)\leq \frac{C}{\mu^{2d}}\big(\mu^d + \mu^{d+\gamma} \big)\big(C +  \mu^{d-\beta}\big).$$

Now the Borel–Cantelli lemma is used to find conditions for $\xi(\mu_n)\xrightarrow{a.s.}0.$

By  Chebyshev's inequality one gets
$$\sum_{n=1}^{\infty}P(|\xi(\mu_n)| > \varepsilon) \leq \frac{1}{\varepsilon^2}\sum_{n=1}^{\infty}Var(\xi(\mu_n))\leq C\sum_{n=0}^{\infty}\frac{\big(\mu_n^d + \mu_n^{d+\gamma} \big)\big(C +  \mu_n^{d-\beta}\big)}{\mu_n^{2d}}.$$

Let $\beta < d,$ then
$$\frac{\big(\mu_n^d + \mu_n^{d+\gamma} \big)\big(C+  \mu_n^{d-\beta}\big)}{\mu_n^{2d}}\leq \frac{C}{\mu_n^{\beta-\gamma}}, \ \ \ \ \ \ n\to\infty.$$

 Thus, for $\beta < d$ the sequence $\xi(\mu_n)\xrightarrow{a.s.}0,$ when $n\to\infty,$ if
$\sum_{n=1}^{\infty}{\mu_n^{\gamma-\beta}}<+\infty.$

Let $\beta \geq d.$ Then
$$\frac{\big(\mu_n^d + \mu_n^{d+\gamma} \big)\big(C +  \mu_n^{d-\beta}\big)}{\mu_n^{2d}}\leq  \frac{C}{\mu_n^{d-\gamma}}.$$

Thus, for $\beta\geq d$ the sequence $\xi(\mu_n)\xrightarrow{a.s.}0,$ when $n\to\infty,$ if
$$\sum_{n=1}^{\infty}\frac{1}{\mu_n^{d-\gamma}}<+\infty.\vspace{-6mm}$$
\end{proof}

\begin{lemma}
\label{lemma2}
Let Assumption \ref{assump1} be satisfied. If there exists an increasing sequence of positive numbers $\{ \mu_n, n\geq 1\}$ such that
\begin{equation}
\label{cond3}
\sum_{n=1}^{\infty}  \frac{(\mu_{n+1}^d - \mu_{n}^d)^2\mu_n^{\gamma}}{\mu_n^{2d}} < + \infty
\end{equation} and
\begin{equation}
\label{cond4}
\sum_{n=1}^{\infty}\bigg(\frac{1}{\mu_{n+1}^d}-\frac{1}{\mu_n^d}\bigg)^2 \mu_{n+1}^{2d+{\gamma}}<+\infty,
\end{equation} 
then the sequence of random variables $\eta(\mu_n) := \sup_{\mu \in [\mu_n, \mu_{n+1})}\big|\xi(\mu)-\xi(\mu_n)\big| \xrightarrow{a.s.}0,\ n\to\infty.$
\end{lemma}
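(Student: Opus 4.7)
The plan is to bound $\eta(\mu_n)$ pathwise by two random quantities that can be estimated separately in second moment. Using the inclusion $\Delta(\mu_n)\subseteq\Delta(\mu)\subseteq\Delta(\mu_{n+1})$ for $\mu\in[\mu_n,\mu_{n+1})$ (which holds for the natural class of homothetic scalings of $\Delta$ considered here), one writes
$$\xi(\mu)-\xi(\mu_n)=\frac{1}{\mu^d}\int_{\Delta(\mu)\setminus\Delta(\mu_n)}X(s)\,ds+\left(\frac{1}{\mu^d}-\frac{1}{\mu_n^d}\right)\int_{\Delta(\mu_n)}X(s)\,ds=:D(\mu)+N(\mu).$$
The aim is to show that $E[\sup_\mu D(\mu)^2]$ and $E[\sup_\mu N(\mu)^2]$ are summable in $n$ along the subsequence $\{\mu_n\}$, after which Chebyshev's inequality and the Borel--Cantelli lemma finish the argument.

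For $N(\mu)$ only the prefactor depends on $\mu$ and is monotone in $\mu$, so $\sup_{\mu\in[\mu_n,\mu_{n+1})}|N(\mu)|\leq(1/\mu_n^d-1/\mu_{n+1}^d)\bigl|\int_{\Delta(\mu_n)}X(s)\,ds\bigr|$. Squaring and taking expectation reduces the task to bounding the variance of $\int_{\Delta(\mu_n)}X(s)\,ds$. Repeating the Fubini interchange from the proof of Lemma~\ref{lemma1} with the crude bound $\rho(\|s_1-s_2\|)\leq\rho(0)$ gives a variance of order $\mu_n^{2d+\gamma}\leq\mu_{n+1}^{2d+\gamma}$, so $E\sup_\mu N(\mu)^2\leq C(1/\mu_n^d-1/\mu_{n+1}^d)^2\mu_{n+1}^{2d+\gamma}$, which is summable by~\eqref{cond4}.

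For $D(\mu)$ both the prefactor and the integration region move with $\mu$, so I collapse the supremum pathwise via
$$\sup_{\mu\in[\mu_n,\mu_{n+1})}|D(\mu)|\leq\frac{1}{\mu_n^d}\int_{\Delta(\mu_{n+1})\setminus\Delta(\mu_n)}|X(s)|\,ds,$$
using $1/\mu^d\leq1/\mu_n^d$ and the inclusion above. The Cauchy--Schwarz inequality applied inside the integral, followed by expectation, yields
$$E\sup_\mu D(\mu)^2\leq\frac{|\Delta(\mu_{n+1})\setminus\Delta(\mu_n)|}{\mu_n^{2d}}\int_{\Delta(\mu_{n+1})\setminus\Delta(\mu_n)}EX^2(s)\,ds.$$
Using the bound $EX^2(s)\leq H(s)\leq C(1+\|s\|^\gamma)$ from Assumption~\ref{assump1} and $|\Delta(\mu_{n+1})\setminus\Delta(\mu_n)|\leq C(\mu_{n+1}^d-\mu_n^d)$, the right-hand side is dominated by a constant multiple of $(\mu_{n+1}^d-\mu_n^d)^2\mu_n^\gamma/\mu_n^{2d}$ (using that on the annulus $\|s\|^\gamma$ is comparable to $\mu_n^\gamma$ in the radial parametrization), and summability follows from~\eqref{cond3}.

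Combining $\eta(\mu_n)\leq\sup_\mu|D(\mu)|+\sup_\mu|N(\mu)|$ with the two bounds, Chebyshev's inequality yields $\sum_nP(\eta(\mu_n)>\varepsilon)<\infty$ for every $\varepsilon>0$, and the Borel--Cantelli lemma delivers $\eta(\mu_n)\xrightarrow{a.s.}0$. The principal technical obstacle I anticipate is the supremum-in-expectation interchange for $D(\mu)$: dominating $\int_{\Delta(\mu)\setminus\Delta(\mu_n)}X(s)\,ds$ by $\int_{\Delta(\mu_{n+1})\setminus\Delta(\mu_n)}|X(s)|\,ds$ discards the cancellations of $X$ on the annulus, which is wasteful but sufficient for the $L^2$ estimate required here, while careful bookkeeping is needed to match the annulus-integrated moment bound exactly to the $\mu_n$ (rather than $\mu_{n+1}$) appearing in~\eqref{cond3}.
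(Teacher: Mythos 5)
Your proposal follows essentially the same route as the paper: the same splitting of $\xi(\mu)-\xi(\mu_n)$ into an annulus term and a normalization-change term (your rearrangement of which factor carries $1/\mu^d$ versus $1/\mu_n^d$ is immaterial), the same pathwise domination of both suprema by random variables not depending on $\mu$, and the same second-moment/Chebyshev/Borel--Cantelli conclusion; your Cauchy--Schwarz inside the integral and the crude $\rho\leq\rho(0)$ variance bound give estimates of the same order as the paper's. The one imprecise step is the claim that on the annulus $\Delta(\mu_{n+1})\setminus\Delta(\mu_n)$ the factor $\|s\|^\gamma$ is comparable to $\mu_n^\gamma$: nothing in the lemma bounds the ratio $\mu_{n+1}/\mu_n$, so the honest bound is $\|s\|^\gamma\leq C\,\mu_{n+1}^\gamma$, which produces $C(\mu_{n+1}^d-\mu_n^d)^2\mu_{n+1}^\gamma/\mu_n^{2d}$ rather than the summand of \eqref{cond3}. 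This does not damage the argument, since
$$\frac{(\mu_{n+1}^d-\mu_n^d)^2\,\mu_{n+1}^{\gamma}}{\mu_n^{2d}}=\bigg(\frac{1}{\mu_{n+1}^d}-\frac{1}{\mu_n^d}\bigg)^2\mu_{n+1}^{2d+\gamma},$$
which is exactly the summand of \eqref{cond4}; so summability of your annulus term follows from the stated hypotheses without the comparability claim (the paper's own estimate of this term likewise ends with $\mu_{n+1}^\gamma$ in the numerator). You should simply replace the parenthetical justification by this identity.
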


\begin{proof}
It is enough to show that the random variables  $\eta(\mu_n)$  are bounded by a sequence of random variables converging a.s. to $0$,  when $n \to \infty$.

The random variables $\eta(\mu_n)$ allow the following  estimation from above
$$\sup_{\mu\in[\mu_n,\mu_{n+1})}\big|\xi(\mu) - \xi(\mu_n)\big| = \sup_{\mu\in[\mu_n,\mu_{n+1})}\bigg|\frac{1}{\mu^d}\int_{\Delta(\mu)}X(s)ds - \frac{1}{\mu_n^d}\int_{\Delta(\mu_n)}X(s)ds\bigg| $$
$$\leq  \sup_{\mu\in[\mu_n,\mu_{n+1})}\bigg|\frac{1}{\mu_n^d}\int \limits_{\Delta(\mu)\setminus \Delta(\mu_n)}X(s)ds\bigg|+\sup_{\mu\in[\mu_n,\mu_{n+1})}\bigg|\bigg(\frac{1}{\mu^d}-\frac{1}{\mu_n^d}\bigg)\int_{\Delta(\mu)}X(s)ds \bigg|.$$

The above supremums can be estimated as
$$ \sup_{\mu\in[\mu_n,\mu_{n+1})}\bigg|\frac{1}{\mu_n^d}\int \limits_{\Delta(\mu)\setminus \Delta(\mu_n)}X(s)ds\bigg| \leq  \frac{1}{\mu_n^d}\int \limits_{\Delta(\mu_{n+1})\setminus \Delta(\mu_n)}|X(s)|ds := I_{\mu_n}^{(1)},$$
$$\sup_{\mu\in[\mu_n,\mu_{n+1})}\bigg|\bigg(\frac{1}{\mu^d}-\frac{1}{\mu_n^d}\bigg)\int\limits_{\Delta(\mu)}X(s)ds \bigg|          \leq            \bigg(\frac{1}{\mu_n^d}-\frac{1}{\mu_{n+1}^d}\bigg)\int\limits_{\Delta(\mu_{n+1})}|X(s)|ds := I_{\mu_n}^{(2)}.$$

The next step is to find conditions that guarantee that $I_{\mu_n}^{(1)}\xrightarrow{a.s.}0$ and $I_{\mu_n}^{(2)}\xrightarrow{a.s.}0,$ when $n\to\infty.$

Using Markov's inequality, the following series  can be estimated as
$$\sum_{n=1}^{\infty} P(I_{\mu_n}^{(1)}>\varepsilon)  = \sum_{n=1}^{\infty}P\bigg( \frac{1}{\mu_n^d}\int \limits_{\Delta(\mu_{n+1}) \setminus \Delta(\mu_n)}|X(s)|ds > \varepsilon\bigg)$$
$$\leq\frac{1}{\varepsilon^2}\sum_{n=1}^{\infty} \frac{1}{\mu_n^{2d}}E\bigg( \int \limits_{\Delta(\mu_{n+1}) \setminus \Delta(\mu_n)}|X(s)|ds\bigg)^2  = \frac{1}{\varepsilon^2}\sum_{n=1}^{\infty}\frac{1}{\mu_n^{2d}} \ \ \iint \limits_{\big(\Delta(\mu_{n+1}) \setminus \Delta(\mu_n)\big)^2}E|X(s_1)||X(s_2)|ds_1ds_2,$$ 
where $\iint\limits_{A^2}$ denotes the double integral $\int_A\int_A.$

By using H\"older's inequality for $p=q=2$ one gets
$$\sum_{n=1}^{\infty} P(I_{\mu_n}^{(1)}>\varepsilon) \leq \frac{1}{\varepsilon^{2}}\sum_{n=1}^{\infty} \frac{1}{\mu_n^{2d}} \ \ \iint \limits_{\big(\Delta(\mu_{n+1}) \setminus \Delta(\mu_n)\big)^2}\bigg(EX^2(s_1)EX^2(s_2) \bigg)^{1/2} ds_1ds_2 $$
$$=
\frac{1}{\varepsilon^2}\sum_{n=1}^{\infty} \frac{1}{\mu_n^{2d}}  \ \ \bigg(\int \limits_{\Delta(\mu_{n+1}) \setminus \Delta(\mu_n)}\big(EX^2(s)\big)^{1/2} ds\bigg)^{2} $$
$$\leq \frac{|\Delta(1)|^2}{\varepsilon^2}\sum_{n=1}^{\infty} \frac{(\mu_{n+1}^{d} - \mu_{n}^{d})^2\sup_{s\in \Delta(\mu_{n+1}) \setminus \Delta(\mu_n)}H(s)}{\mu_n^{2d}}  \leq  \frac{C}{\varepsilon^2}\sum_{n=1}^{\infty} \frac{(\mu_{n+1}^{d} - \mu_{n}^{d})^2(1+C\mu_{n+1}^{\gamma})}{\mu_n^{2d}}.$$

Thus, by the Borel-Cantelli Lemma, the sequence $I_{\mu_n}^{(1)}\xrightarrow{a.s.}0$, when $n\to\infty,$ if
$$\sum_{n=1}^{\infty} \frac{(\mu_{n+1}^{d} - \mu_{n}^{d})^2\mu_{n+1}^{\gamma}}{\mu_n^{2d}}< + \infty.$$

Now we derive conditions for the convergence $I_{\mu_n}^{(2)}\xrightarrow{a.s.}0,$ when $n\to\infty.$

Using Markov's and H\"older's inequalities one obtains
$$\sum_{n=1}^{\infty}P(I_{\mu_n}^{(2)}>\varepsilon) =  \sum_{n=1}^{\infty}P\bigg(\bigg(\frac{1}{\mu_{n}^{d}}-\frac{1}{\mu_{n+1}^{d}}\bigg)\int_{\Delta(\mu_{n+1})}|X(s)|ds > \varepsilon\bigg)\leq \frac{1}{\varepsilon^2}\sum_{n=1}^{\infty}\bigg(\frac{1}{\mu_{n+1}^{d}}-\frac{1}{\mu_{n}^{d}}\bigg)^2$$
 $$\times E\bigg(\int_{\Delta(\mu_{n+1})}|X(s)|ds \bigg)^2=\frac{1}{\varepsilon^2}\sum_{n=1}^{\infty}\bigg(\frac{1}{\mu_{n+1}^{d}}-\frac{1}{\mu_{n}^{d}}\bigg)^2\iint\limits_{\big(\Delta(\mu_{n+1})\big)^2}E|X(s_1)||X(s_2)|ds_1ds_2$$
$$\leq \frac{1}{\varepsilon^2}\sum_{n=1}^{\infty}\bigg(\frac{1}{\mu_{n+1}^{d}}-\frac{1}{\mu_{n}^{d}}\bigg)^2\bigg(\int\limits_{\Delta(\mu_{n+1})}\bigg(EX^2(s)\bigg)^{1/2}ds\bigg)^2 \leq \frac{C}{\varepsilon^2}\sum_{n=1}^{\infty}\bigg(\frac{1}{\mu_{n+1}^{d}}-\frac{1}{\mu_{n}^{d}}\bigg)^2 (1+C\mu_{n+1}^{{\gamma}})\mu_{n+1}^{2d}.$$

Hence, by the Borel-Cantelli Lemma, the sequence $I_{\mu_n}^{(2)}\xrightarrow{a.s.}0$, when $n\to\infty,$ if
$$\sum_{n=1}^{\infty}\bigg(\frac{1}{\mu_{n+1}^d}-\frac{1}{\mu_n^d}\bigg)^2 \mu_{n+1}^{{\gamma}+2d}<+\infty.$$

From the positiveness of $\eta(\mu_n) = \sup_{\mu \in [\mu_n, \mu_{n+1})}\big|\xi(\mu)-\xi(\mu_n)\big|$  and the boundedness $\eta(\mu_n) \leq I_{\mu_n}^{(1)}+I_{\mu_n}^{(2)}$ such that $I_{\mu_n}^{(1)}\xrightarrow{a.s.}0$ and $I_{\mu_n}^{(2)}\xrightarrow{a.s.}0$ it follows the convergences $$\sup_{\mu \in [\mu_n,\mu_{n+1})}\big|\xi(\mu) - \xi(\mu_n)\big|\xrightarrow{a.s.}0, \ n\to\infty.\vspace{-5mm}$$
\end{proof}

\begin{theorem}
\label{theorem1}
If for $\beta>0$ and $\gamma>0$ there exists a sequence $\{\mu_n, n\geq 1\}$ satisfying the assumptions in Lemmas \ref{lemma1} and \ref{lemma2}, then $\xi(\mu)\xrightarrow{a.s.}0,$ as $\mu\to\infty.$

\end{theorem}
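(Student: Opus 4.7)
The plan is to assemble Lemmas~\ref{lemma1} and \ref{lemma2} through the classical method of subsequences, which is exactly the strategy sketched in the paragraph immediately preceding Lemma~\ref{lemma1}. No new estimates on $\xi(\mu)$ are required; the argument reduces to a short pathwise triangle inequality applied on an event of full probability, and this is why the present theorem reads like a routine corollary of the two preceding lemmas.

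First I would take the sequence $\{\mu_n\}$ supplied by the hypothesis and invoke Lemma~\ref{lemma1} to obtain a measurable event $\Omega_1$ with $P(\Omega_1)=1$ on which $\xi(\mu_n)(\omega)\to 0$ as $n\to\infty$. Then I would invoke Lemma~\ref{lemma2} on the same sequence to obtain a second event $\Omega_2$ with $P(\Omega_2)=1$ on which $\eta(\mu_n)(\omega):=\sup_{\mu\in[\mu_n,\mu_{n+1})}|\xi(\mu)-\xi(\mu_n)|(\omega)\to 0$. Because the intersection $\Omega_0:=\Omega_1\cap\Omega_2$ still has full probability, it suffices to prove the pointwise convergence $\xi(\mu)(\omega)\to 0$ for every $\omega\in\Omega_0$.

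Fix $\omega\in\Omega_0$. For every $\mu\geq\mu_1$ there is a unique index $n(\mu)$ with $\mu\in[\mu_{n(\mu)},\mu_{n(\mu)+1})$, and by the definition of $\eta$ we have the pathwise bound
\[
|\xi(\mu)(\omega)|\;\leq\;|\xi(\mu_{n(\mu)})(\omega)|\;+\;\eta(\mu_{n(\mu)})(\omega).
\]
As $\mu\to\infty$ one has $n(\mu)\to\infty$; both terms on the right tend to zero by the choice of $\Omega_0$, hence $\xi(\mu)(\omega)\to 0$. Since $\omega\in\Omega_0$ was arbitrary, $\xi(\mu)\xrightarrow{a.s.}0$, which is the conclusion of the theorem.

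The only auxiliary fact worth verifying carefully is the divergence $\mu_n\to\infty$, without which $n(\mu)$ would fail to grow. This is forced by conditions \eqref{cond1}--\eqref{cond2}: for the series in Lemma~\ref{lemma1} to be summable the general term must tend to zero, which in turn requires $\mu_n^{\beta-\gamma}\to\infty$ (in case $\beta<d$) or $\mu_n^{d-\gamma}\to\infty$ (in case $\beta\geq d$); in particular $\beta>\gamma$, respectively $d>\gamma$, and $\mu_n\to\infty$. I do not expect a substantive obstacle anywhere else: the heavy lifting, namely the covariance and increment estimates together with the Borel--Cantelli applications, has already been carried out in the two lemmas, and what remains here is purely combinatorial/pathwise.
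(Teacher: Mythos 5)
Your proposal is correct and follows essentially the same route as the paper: locating $\mu$ in $[\mu_n,\mu_{n+1})$ and combining Lemma~\ref{lemma1} with Lemma~\ref{lemma2} via a pathwise triangle (sandwich) inequality on the intersection of the two full-probability events. Your extra check that the summability conditions \eqref{cond1}--\eqref{cond2} force $\mu_n\to\infty$ is a nice touch that the paper states without justification, but it does not change the argument.
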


\begin{proof}

As $\{ \mu_n, n \geq 1 \}\subset \mathbb{R}_{+}$ is an increasing sequence and $\mu_n \to \infty, \ n\to\infty,$ then for each $\mu\in\mathbb{R}_{+}$ there exists $\mu_n$ such that $\mu \in [\mu_n, \mu_{n+1}).$ It follows from  $\xi(\mu) = \xi(\mu_n) + \xi(\mu) - \xi(\mu_n)$ that
$$\xi(\mu_n) - \sup_{\mu \in [\mu_n,\mu_{n+1})}\bigg|\xi(\mu) - \xi(\mu_n)\bigg| \leq\xi(\mu) \leq \xi(\mu_n) + \sup_{\mu \in [\mu_n,\mu_{n+1})}\bigg|\xi(\mu) - \xi(\mu_n)\bigg|.$$

Thus, by Lemma \ref{lemma1} and by Lemma \ref{lemma2} we get $\xi(\mu)\xrightarrow{a.s.}0,$ as $\mu\to\infty.$
\end{proof}

\begin{theorem}
\label{theorem2}
Let Assumption \ref{assump1} be satisfied. The SLLN holds true, if one of the following conditions is satisfied
\begin{itemize}
\item[(i)] $\beta\in(0,d)$ and $2\gamma<\beta,$
\item[(ii)] $\beta \geq d$ and $2\gamma<d.$
\end{itemize}

\end{theorem}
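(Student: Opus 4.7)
My plan is to invoke Theorem~\ref{theorem1} by exhibiting a single power-law sequence $\mu_n = n^\alpha$ for which all four summability conditions \eqref{cond1}--\eqref{cond4} hold simultaneously, and then read off the admissible range for the exponent $\alpha$ from the two hypotheses.

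First, I would compute the leading-order behaviour of each summand under the ansatz $\mu_n = n^\alpha$. The Lemma~\ref{lemma1} series reduce to $\sum n^{-\alpha(\beta-\gamma)}$ in case (i) and $\sum n^{-\alpha(d-\gamma)}$ in case (ii), so convergence requires $\alpha > 1/(\beta-\gamma)$ and $\alpha > 1/(d-\gamma)$ respectively. For Lemma~\ref{lemma2}, the mean value theorem gives $\mu_{n+1}^d - \mu_n^d \sim \alpha d\, n^{\alpha d - 1}$ and $\mu_n^{-d} - \mu_{n+1}^{-d} \sim \alpha d\, n^{-\alpha d - 1}$, so after substitution both \eqref{cond3} and \eqref{cond4} collapse to a series of type $\sum n^{\alpha\gamma - 2}$, which converges iff $\alpha\gamma < 1$.

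Combining these upper and lower bounds, I would then verify that an admissible $\alpha$ exists in each case of the theorem: in (i) the interval $\alpha \in (1/(\beta-\gamma),\ 1/\gamma)$ is non-empty iff $\gamma < \beta - \gamma$, i.e.\ $2\gamma < \beta$; in (ii) the interval $\alpha \in (1/(d-\gamma),\ 1/\gamma)$ is non-empty iff $2\gamma < d$. Picking any such $\alpha$, the sequence $\mu_n = n^\alpha$ satisfies the hypotheses of both lemmas, and Theorem~\ref{theorem1} delivers the SLLN.

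The main (indeed the only substantive) obstacle is checking that these constraints on $\alpha$ are jointly satisfiable. Lemma~\ref{lemma2} forces $\alpha < 1/\gamma$ — a ceiling on how fast $\mu_n$ may grow, since otherwise the gaps $\mu_{n+1}^d - \mu_n^d$ dominate the variance estimates for $I^{(1)}_{\mu_n}$ and $I^{(2)}_{\mu_n}$ — while Lemma~\ref{lemma1} imposes a floor that rules out too-slow growth. The explicit thresholds $2\gamma<\beta$ and $2\gamma<d$ in the theorem are exactly the conditions under which these competing demands are compatible within the power-function family, and a brief remark can record that any polynomial-type sequence of the appropriate order would work equally well.
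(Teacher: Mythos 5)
Your proposal is correct and follows essentially the same route as the paper: take $\mu_n=n^\alpha$, reduce the Lemma~\ref{lemma1} conditions to $\alpha>1/(\beta-\gamma)$ (resp.\ $\alpha>1/(d-\gamma)$) and the Lemma~\ref{lemma2} conditions to $\alpha\gamma<1$ via the asymptotics of the increments, and observe that the resulting interval for $\alpha$ is non-empty exactly when $2\gamma<\beta$ (resp.\ $2\gamma<d$). The paper's own proof does precisely this calculation before invoking Theorem~\ref{theorem1}.
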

\begin{proof}

Let $\mu_n = n^\alpha.$ Consider the first case and check the conditions of Lemmas \ref{lemma1} and \ref{lemma2}.

The condition \eqref{cond1} becomes
$$\sum_{n=1}^{\infty}\frac{1}{n^{\alpha(\beta-\gamma)}}<\infty,$$ and is satisfied if $\alpha > \frac{1}{\beta-\gamma}.$

The conditions \eqref{cond3} and \eqref{cond4} hold if $\alpha\gamma<1.$ Indeed, from the asymptotic behavior of the terms in \eqref{cond3} and  \eqref{cond4}
$$\frac{(\mu_{n+1}^d - \mu_{n}^d)^2\mu_n^{\gamma}}{\mu_n^{2d}} = \frac{((n+1)^{\alpha d} - n^{\alpha d})^2n^{\alpha\gamma}}{n^{2\alpha d}}\sim \frac{\big((n+1)^{\alpha d-1}\big)^2n^{\alpha\gamma}}{n^{2\alpha d}} = \frac{1}{n^{2 - \alpha\gamma}}$$ and
$$\bigg(\frac{1}{\mu_{n+1}^d}-\frac{1}{\mu_n^d}\bigg)^2 \mu_{n+1}^{\gamma+2d} = \bigg(\frac{1}{(n+1)^{\alpha d}}-\frac{1}{n^{\alpha d}}\bigg)^2 (n+1)^{\alpha\gamma+2\alpha d}$$
$$\sim \frac{(n+1)^{\alpha\gamma+2\alpha d}}{(n+1)^{2\alpha d+2}} = \frac{1}{(n+1)^{2-\alpha\gamma}}.$$

Thus, the conditions of Lemmas \ref{lemma1} and \ref{lemma2} are satisfied if $0 < \beta < d,\ \alpha > \frac{1}{\beta-\gamma}$ and $\alpha\gamma < 1.$ Then, it follows from $\frac{1}{\beta-\gamma}<\alpha<\frac{1}{\gamma}$ that the required $\alpha$ exists if $2\gamma<\beta.$

Using the same approach, we derive that for $\beta\geq d$ the conditions of Lemmas \ref{lemma1} and \ref{lemma2} are satisfied if $\beta \geq d$ and $\frac{1}{d-\gamma}<\alpha<\frac{1}{\gamma}.$ Thus, the required $\alpha$ exists if $2\gamma < d.$
\end{proof}

\begin{remark}
\label{remark1}
The upper bound in Assumption \ref{assump1} can be replaced by another one that guarantees for $||s_1||,||s_2||\to+\infty$ the covariance $cov(X(s_1), X(s_2))$ sufficiently fast decays to zero for $s_1$ and $s_2$ which are getting further away from each other, and it  increases not too fast for $s_1$ and $s_2$ that are close. For instance, one can use the conditions
$$|B(s_1,s_2)|\leq  C(1+||s_1||+||s_2||)^\gamma\rho(||s_1-s_2||)$$ or
$$|B(s_1,s_2)|\leq  C(1+||s_1||^\gamma)(1+||s_2||^\gamma)\rho(||s_1-s_2||).$$
\end{remark}

\begin{remark}
\label{remark2}
Homogeneous isotropic random fields satisfy Assumption \ref{assump1} with $\gamma=0,$ if their covariance functions have hyberbolic bounded decays of order $\beta.$
\end{remark}

\section{Non-stationary example}~\label{sec_ex}
As SLLN holds for homogeneous isotropic random fields with hyperbolically bounded covariance functions, it would be interesting to provide a simple example of non-homogeneous and non-isotropic random field for which the result holds true.

\begin{example}
Let $X(s) = g(s)H_k(Z(s)),\ s \in \mathbb{R}^d, \ d\geq1,$ where $g(\cdot)$ is a deterministic function, $H_k(\cdot), k\in \mathbb{N},$ is the Hermite polynomial of degree $k$ and $Z(\cdot)$ is a homogeneous isotropic  Gaussian random field with $EZ(s)=0$ and the covariance function $B(s),$ such that $B(0)=1$ and
$$B_Z(s)=E\big(Z(s)Z(0)\big)=\frac{L(||s||)}{||s||^{\beta_0}}, \ \beta_0 >0,$$ where $L(s)$ is a slowly varying function.

By properties of the Hermite polynomials of Gaussian random variables, see, for example, $(2.1.8)$ in \cite{ivanov2012statistical}
$$EX(s) = g(s)EH_k(Z(s))=0,$$
$$B(s_1,s_2)= g(s_1)g(s_2)E\big(H_k(Z(s_1))H_k(Z(s_2)) \big)= g(s_1)g(s_2)k!B^k(||s_1-s_2||).$$

By properties of slowly varying functions, see Proposition $1.3.6(v)$ in \citet*{bingham1989regular}, for any $\beta > k\beta_0$ there is a constant C such that
$$B_Z^k(||s||)\leq\frac{C}{||s||^\beta},\ ||s||\geq1.$$

Thus, if
\begin{equation}
\label{eq1}
|g(s_1)g(s_2)|\leq C(1+||s_1||^\gamma+||s_2||^\gamma)
\end{equation} and $k\beta_0<\beta,$ then Assumption \ref{assump1} holds true and by Theorem \ref{theorem2}
$$\frac{1}{\mu^{d}}\int_{\Delta(\mu)}g(s)H_k(Z(s))ds\xrightarrow{a.s.}0, \ \mu\to+\infty.$$

First, note that it follows from
\begin{equation}
\label{eq2}
|g(s)|\leq C(1+||s||^{\gamma_0}), \ \gamma_0 >0,
\end{equation} that
$$|g(s_1)g(s_2)|\leq C(1+||s_1||^{2\gamma_0}+||s_2||^{2\gamma_0}).$$ Thus, if \eqref{eq2} holds, then \eqref{eq1} is true with $\gamma = 2 \gamma_0.$

\end{example}
Some example of functions $g(\cdot)$ satisfying \eqref{eq1} are
\begin{itemize}
\item[(i)] $g(s) \equiv C > 0.$ This case corresponds to the classical equally-weighted average functionals of homogeneous isotropic process or field;

\item[(ii)] $g(s)=\prod_{i=1}^ds_i^{l_i},$ where $s=(s_1,..,s_d), \ l_i>0, \ i=1,..,d.$ Note that
$$|g(s)| = \prod_{i=1}^d|s_i|^{l_i}\leq 1 + ||s||^{2\sum_{i=1}^dl_i}$$ and \eqref{eq1} is satisfied with $\gamma_0=2\sum_{i=1}^dl_i;$

\item[(iii)] $g(s)=\prod_{i=1}^ds_i\ln (q_i+|s_i|),$ where $s=(s_1,...,s_d)$ and $q_i>1,\ i=1,..,d.$
\end{itemize}
By using the logarithm inequality $\ln(x)\leq x-1,$ one obtains that
$$|g(s)|\leq C \prod_{i=1}^d|s_i|+\prod_{i=1}^d|s_i|^2$$ and the upper bound follows from the estimate in (ii) and \eqref{eq2}.

The weight functions in (ii) and (iii) are often used in non-linear regression and $M$ estimators applications.

It follows from results in \cite{alodat2020, ivanov2012statistical} that for the field $X(s)$ in the examples above one can obtain not only SLLN, but also limit theorems about the convergence of distributions. Namely, the following result holds true.

\begin{theorem}\cite{alodat2020} \label{tareq} Let a function $g(s), s \in \mathbb{R}^d,$ satisfy the condition $\mu^{2d-\beta_0k}g^2(\mu\cdot1_d)L^k(\mu)\to\infty,$ when $\mu\to\infty,$ and there exists a function $g^{*}(\cdot)$ such that
$$\lim_{\mu\to\infty}\bigg|\frac{g(\mu s)}{g(\mu 1_d)} - g^{*}(s)\bigg|\to 0$$ uniformly for $s\in\Delta(1+\varepsilon)$ for some $\varepsilon>0,$ 
$$\iint\limits_{\big(\Delta(1+\varepsilon) \big)^2} \frac{|g^{*}(s_1)g^{*}(s_2)|}{||s_1-s_2||^{\beta_0\kappa}}ds_1 ds_2 < +\infty, \quad \int_{\mathbb{R}^{dk}}\prod_{j=1}^k ||\lambda_j||^{\beta_0-d}|K_{\Delta}(\lambda_j, g^{*})|^2\prod_{j=1}^k d\lambda_j < +\infty,$$ and
$$\lim_{\mu\to\infty}\int\limits_{\mathbb{R}^{dk}}\bigg| \int\limits_{\Delta}e^{i(\lambda_1+..+\lambda_k,s)}\bigg(\frac{g(\mu ||s||)}{g(\mu \cdot 1_d||s||)}\prod_{j=1}^k\sqrt{\frac{L(\mu/||\lambda_j||)}{L(\mu)}} -g^{*}(s)\bigg)ds \bigg|^2\prod_{j=1}^k ||\lambda^{\beta_0-d}||\prod_{j=1}^kd\lambda_j = 0.$$  Then, for $\beta_0 \in \left(0,\min\left(\frac{d}{k}, \frac{d+1}{2}\right)\right)$ the random variables
$$\frac{1}{\mu^{d-\beta_0 k/2}g(\mu \cdot 1_d)L^{k/2}(\mu)c_1^{k/2}(d, \beta_0)}\int_{\Delta(\mu)}g(s)H_k(Z(s))ds$$ converge weakly to the random variable
$$\xi^{*}:=\int_{\mathbb{R}^{dk}}^{\prime} K_{\Delta}(\lambda_1+..+\lambda_k, g^{*})\frac{\prod_{j=1}^kW(d\lambda_j)}{\prod_{j=1}^k||\lambda_j||^{(d-\beta_0)/2}},$$ where $W(\cdot)$ is the complex Gaussian white noise random measure on $\mathbb{R}^d$, $\int_{\mathbb{R}^d}^{\prime}$ denotes the multiple Wiener-It\^{o} integral, where the diagonal hyperplanes  $\lambda_i = \pm \lambda_j,\ i,j = 1,..,k,\ i\neq j,$ are excluded from the domain of integration, $1_d=(1,..,1)\in\mathbb{R}^d,\ K_{\Delta}(\lambda, g^{*}) = \int_{\Delta}e^{i(\lambda, s)}g^{*}(s)ds,$ $c_1(d, \beta_0) = \Gamma((d-\beta)/2)/2^{\beta_0}\pi^{d/2}\Gamma(\beta_0/2).$
\end{theorem}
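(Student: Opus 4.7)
The strategy is the classical spectral/Wiener--It\^o approach to non-central limit theorems in the spirit of Dobrushin--Major and Taqqu. Since $Z$ is centred Gaussian, homogeneous and isotropic, it admits a spectral representation $Z(s)=\int_{\mathbb{R}^d}e^{i(\lambda,s)}W_Z(d\lambda)$ with $W_Z$ a complex Gaussian random measure whose control measure has density $f(\lambda)$. By a Tauberian theorem for isotropic regularly varying covariances, the assumption $B_Z(s)=L(\|s\|)/\|s\|^{\beta_0}$ with $\beta_0\in(0,d)$ translates into $f(\lambda)\sim c_1(d,\beta_0)L(1/\|\lambda\|)/\|\lambda\|^{d-\beta_0}$ as $\|\lambda\|\to 0$. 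The first step is to invoke the diagram/It\^o formula for Hermite polynomials, writing
\[
H_k(Z(s))=\int_{\mathbb{R}^{dk}}^{\prime} e^{i(\lambda_1+\cdots+\lambda_k,s)}\prod_{j=1}^kW_Z(d\lambda_j),
\]
so that, by stochastic Fubini,
\[
\int_{\Delta(\mu)}g(s)H_k(Z(s))\,ds=\int_{\mathbb{R}^{dk}}^{\prime} K_{\Delta(\mu)}\!\Bigl(\sum_{j=1}^k\lambda_j,\;g\Bigr)\prod_{j=1}^kW_Z(d\lambda_j).
\]

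Next I would rescale $\lambda_j\mapsto\lambda_j/\mu$ and $s=\mu u$ inside $K_{\Delta(\mu)}$. This produces an extra factor $\mu^d$ from the Jacobian of the spatial change of variables, and replaces the driving measure by a rescaled complex Gaussian measure whose control density is $\mu^{-d}f(\lambda/\mu)$. Using the asymptotic form of $f$ together with the uniform convergence theorem for slowly varying functions (Potter's bound), this control density is comparable to $c_1(d,\beta_0)\mu^{-\beta_0}L(\mu/\|\lambda\|)\|\lambda\|^{-(d-\beta_0)}$, so the rescaled measure equals in law a weighted white noise: $\mu^{-\beta_0/2}c_1^{1/2}(d,\beta_0)L^{1/2}(\mu/\|\lambda\|)\|\lambda\|^{-(d-\beta_0)/2}W(d\lambda)$. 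Dividing by the normalising constant $\mu^{d-\beta_0k/2}g(\mu\cdot 1_d)L^{k/2}(\mu)c_1^{k/2}(d,\beta_0)$ therefore converts the statistic into a $k$-fold Wiener--It\^o integral
\[
Y_\mu=\int_{\mathbb{R}^{dk}}^{\prime} K_\mu(\lambda_1,\dots,\lambda_k)\prod_{j=1}^kW(d\lambda_j),\qquad
K_\mu=\Bigl(\int_\Delta e^{i(\sum\lambda_j,u)}\tfrac{g(\mu u)}{g(\mu\cdot 1_d)}\,du\Bigr)\prod_{j=1}^k\tfrac{L^{1/2}(\mu/\|\lambda_j\|)/L^{1/2}(\mu)}{\|\lambda_j\|^{(d-\beta_0)/2}}.
\]

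The core step is to prove $K_\mu\to K^{*}$ in $L^2(\mathbb{R}^{dk})$, where
\[
K^{*}(\lambda_1,\dots,\lambda_k)=K_\Delta\!\Bigl(\sum_{j=1}^k\lambda_j,\;g^{*}\Bigr)\prod_{j=1}^k\|\lambda_j\|^{-(d-\beta_0)/2}.
\]
The pointwise limit is immediate from the hypothesis $g(\mu s)/g(\mu\cdot 1_d)\to g^{*}(s)$ uniformly on $\Delta(1+\varepsilon)$ and from $L(\mu/\|\lambda_j\|)/L(\mu)\to 1$. Global $L^2$ convergence is exactly what the last displayed hypothesis of the theorem provides; combined with the first two integrability conditions (the real-side integral ensuring $\xi^{*}$ has finite variance, the spectral-side integral controlling the limiting kernel), this upgrades pointwise to $L^2$ convergence. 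Finally, the It\^o isometry $E|I_k(K_\mu-K^{*})|^2=k!\,\|\mathrm{Sym}(K_\mu-K^{*})\|^2_{L^2(\mathbb{R}^{dk})}$ turns $L^2$ convergence of kernels into $L^2$ convergence (hence weak convergence) of the multiple integrals, yielding $Y_\mu\Rightarrow \xi^{*}$.

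The main technical obstacle is the third step: to pass from pointwise to $L^2$ convergence on the unbounded region $\mathbb{R}^{dk}$, one needs a dominating integrable majorant for $|K_\mu|^2$ valid uniformly in large $\mu$. This is where Potter's bound on slowly varying functions and the standing regularity constraint $\beta_0<\min(d/k,(d+1)/2)$ are essential: the first yields $L(\mu/\|\lambda\|)/L(\mu)\le C\max(\|\lambda\|^{\delta},\|\lambda\|^{-\delta})$ for arbitrarily small $\delta$, while the second guarantees local $L^2$-integrability of $\prod\|\lambda_j\|^{-(d-\beta_0)}$ against the modulus squared of $K_\Delta(\sum\lambda_j,g^{*})$ both near the origin and at infinity (via Plancherel applied to the condition involving $\int_{\Delta^2}|g^{*}(s_1)g^{*}(s_2)|\|s_1-s_2\|^{-\beta_0 k}ds_1ds_2<\infty$). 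Once dominated convergence is justified in these two regimes, the remaining bulk region is handled by the fourth hypothesis, completing the argument.
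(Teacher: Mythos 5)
This theorem is not proved in the paper at all: it is imported verbatim from \cite{alodat2020}, so there is no internal proof to compare against. Your plan does reproduce the standard Dobrushin--Major--Taqqu spectral scheme that the cited source follows: the isonormal representation $Z(s)=\int_{\mathbb{R}^d}e^{i(\lambda,s)}\sqrt{f(\lambda)}\,W(d\lambda)$, the It\^o formula giving a $k$-fold Wiener--It\^o representation of $H_k(Z(s))$, stochastic Fubini, the rescaling $\lambda_j\mapsto\lambda_j/\mu$, a Tauberian/Abelian argument turning $B_Z(s)=L(\|s\|)\|s\|^{-\beta_0}$ into $f(\lambda)\sim c_1(d,\beta_0)L(1/\|\lambda\|)\|\lambda\|^{\beta_0-d}$ near the origin, and finally the isometry reducing weak convergence to $L^2$ convergence of kernels, with the theorem's fourth hypothesis supplying exactly that convergence. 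So in outline you are on the route the cited proof takes.

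The genuine gap is the sentence claiming the rescaled control measure \emph{equals in law} the weighted white noise $\mu^{-\beta_0/2}c_1^{1/2}(d,\beta_0)L^{1/2}(\mu/\|\lambda\|)\|\lambda\|^{-(d-\beta_0)/2}W(d\lambda)$. It does not: the spectral density $f$ only has the stated power-law behaviour asymptotically as $\|\lambda\|\to 0$, so after rescaling the kernel contains $\prod_j\sqrt{\mu^{-d}f(\lambda_j/\mu)}$, and one must \emph{estimate} the $L^2(\mathbb{R}^{dk})$ distance between this kernel and the one built from the asymptote, splitting the domain into a neighbourhood of the origin (where the Abelian-type asymptotics plus Potter bounds apply) and its complement (where the contribution must be shown negligible after normalisation, using $\beta_0 k<d$ and the integrability hypotheses). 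This replacement step is the technical heart of the cited proof and is not covered by the theorem's fourth condition, which only compares the $g$-dependent factor with $g^{*}$ against the limiting weight $\prod_j\|\lambda_j\|^{\beta_0-d}$; your plan silently converts an asymptotic equivalence into an exact distributional identity and so skips it. With that step restored (and the ``equals in law'' claim downgraded to an $L^2$ approximation argument), the plan matches the cited approach.
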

\begin{remark}
For the three functions $g(\cdot)$ introduced in the Example of it is easy to see that
\begin{itemize}
\item[(i)] $g^{*}(s) \equiv C,$
\item[(ii)] $g^{*}(s) = \prod_{i=1}^d  s_i^{l_i},$
\item[(iii)] $g^{*}(s)=\prod_{i=1}^d s_i.$
\end{itemize}
\end{remark}
\begin{remark}
For $\beta_0\in(0, \min(\frac{d}{k}, \frac{d+1}{2}))$ the random field $Z(s)$ is long-range dependent and the limit $\xi^{*}$ has  a non-Gaussian distribution if $k \geq 2.$
\end{remark}

\begin{remark}
If the random field $X(s)$ is weak-dependent, one can derive the Central Limit Theorem for the integral functionals of the form \eqref{funct}, see, for example, Theorems 1.7.1-1.7.3 in \cite{ivanov2012statistical}.
\end{remark}

\section{Numerical example}~\label{sec4}
In this section, we provide a numerical example confirming the obtained theoretical results. By simulations of random fields, we show that for the function $g(\cdot)$ satisfying (ii) in the Example in Section~\ref{sec_ex} the integral functional in \eqref{funct}  converges to $0$, as $\mu\to\infty.$ A reproducible version of the code in this paper is available in the folder "Research materials" from the  website~\url{https://sites.google.com/site/olenkoandriy/}.

We consider $d=2,$  the random variables in \eqref{funct} and the random field X(s)  given by  the formula
$$X(s) = \prod_{k=1}^2|s_k|^{\gamma}H_2(Z(s)),$$  where $H_2(x)=x^2-1$ is the Hermite polynomial of order $2$, $Z(s),s=(s_1,s_2)\in \mathbb{R}^2,$ is a homogeneous isotropic Gaussian random field with  the Cauchy type covariance function
$$B_Z(r) = \frac{1}{(1+r^2)^\beta},\ r\geq0.$$ The observation window $\Delta(1) = \square(1):= \{(s_1,s_2):|s_1|\leq 1, |s_2|\leq 1\}$ is a square.

For the simulations we used the values of  $\beta = 0.4$ and $\gamma = 0.1.$  Theorems  \ref{theorem2} and \ref{tareq} hold true for these values.

As the simulations of random fields can be done only on a discrete grid,  we used the dense grid of points $\{ (ih,jh): i,j = -N,-N+1,...,N-1,N \}, N\in\mathbb{N},$ where $h$ is a small fixed step. The integrals in \eqref{funct} were approximated by the  Riemann's sums
$$\int\limits_{\Delta(\mu)}X(s)ds\approx\sum\limits_{i=-N}^N\sum\limits_{j=-N}^NX(ih,jh)h^2=\sum\limits_{i=-N}^N\sum\limits_{j=-N}^N|ih|^{0.1}|jh|^{0.1}H_2(Z(ih,jh))h^2.$$

Then $300$ realizations of the random field $X(s)$ in the square region $\square(300)=\{(s_1,s_2): |s_i|\leq 300, i = 1,2 \}$ were generated.  A realization of the random fields $X(s)$ in the square $\square(300)$ on the 2D grid with the step $h = 0.25$ and the corresponding values of  $\xi(\mu)$ for $\mu = 10, 50, 100,...,300$  are given in Figure~\ref{fig1}. The Q-Q plot of the simulated values of $\xi(300)$ is shown in Figure \ref{fig2a}. As $\mu=300$ is sufficiently large the distribution is close to the asymptotic one. As expected, it is not Gaussian.

\begin{figure}[htb!]
\begin{subfigure}{0.44\textwidth}
  \centering
  \includegraphics[width=1\linewidth,height=6.2cm,trim=0 4mm 0 0,clip]{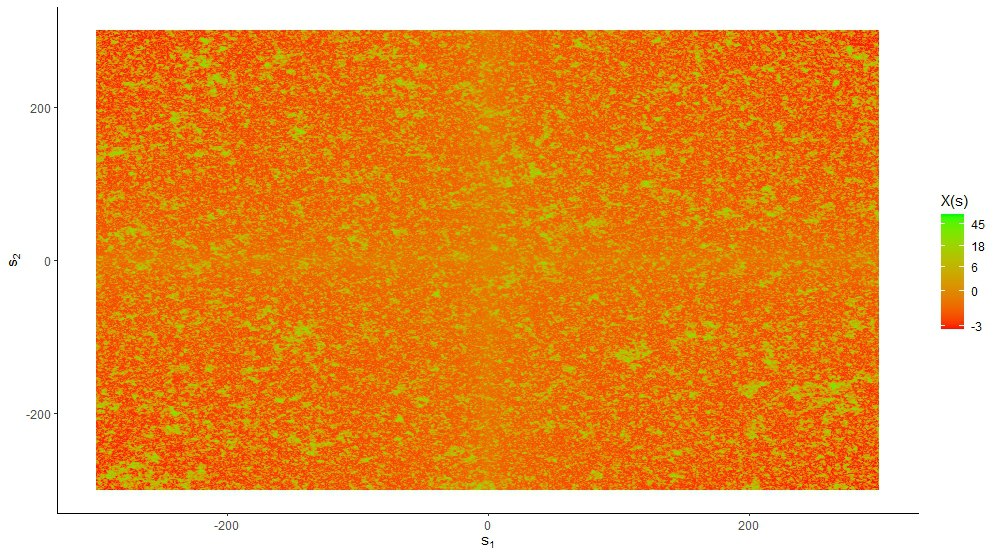}
  \caption{ Realization of $X(s)$}
  \label{fig1a}
\end{subfigure}%
\hspace{3mm}
\begin{subfigure}{0.45\textwidth}
  \centering
  \includegraphics[width=1\linewidth,height=6.4cm,trim=0 6mm 0 7mm,clip]{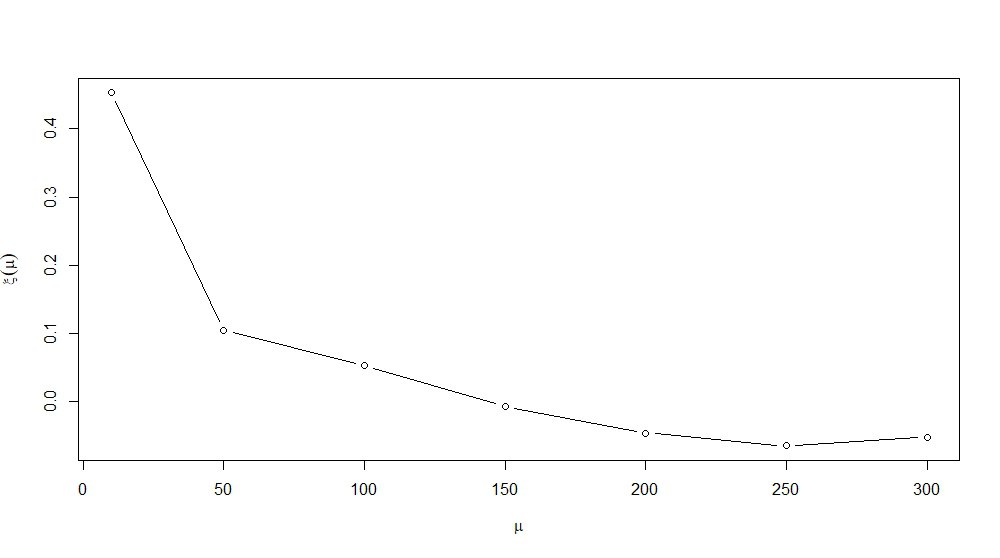}
  \caption{ Realization of $\xi(\mu)$}
  \label{fig1b}
\end{subfigure}
\caption{Realizations of the random field and its integral functional}
\label{fig1}
\end{figure}

Using the obtained realizations of  $X(s),$ the random variables $\xi(\mu)$ were computed for $\mu = 10, 50, 100, .., 300.$ The box plots of the simulated values of $\xi(\mu)$ are given in Figure~\ref{fig2b}. Table~\ref{rmse} shows the corresponding Root Mean Square Error (RMSE) of  $\xi(\mu)$ for different values of $\mu.$ Figure~\ref{fig2b} and the table confirm the convergence of $\xi(\mu)$ to zero when $\mu$ increases.

\begin{table}[hb]
\begin{center}
 \begin{tabular}{|c|c c c c c c c|}
 \hline
$\mu$ \rule[-2mm]{0pt}{7mm}& 10 & 50 & 100 & 150 & 200 & 250 & 300 \\
 \hline
RMSE\rule[-2mm]{0pt}{7mm} & 0.217 & 0.106 & 0.079 & 0.068 & 0.057 & 0.052 & 0.048 \\ [1ex]
 \hline
\end{tabular}
 \caption{RMSE of  $\xi(\mu)$.\label{rmse}}
 \end{center}
\end{table}

\begin{figure}[htb!]
\begin{subfigure}{0.45\textwidth}
  \centering
  \includegraphics[width=1\linewidth, height=6cm]{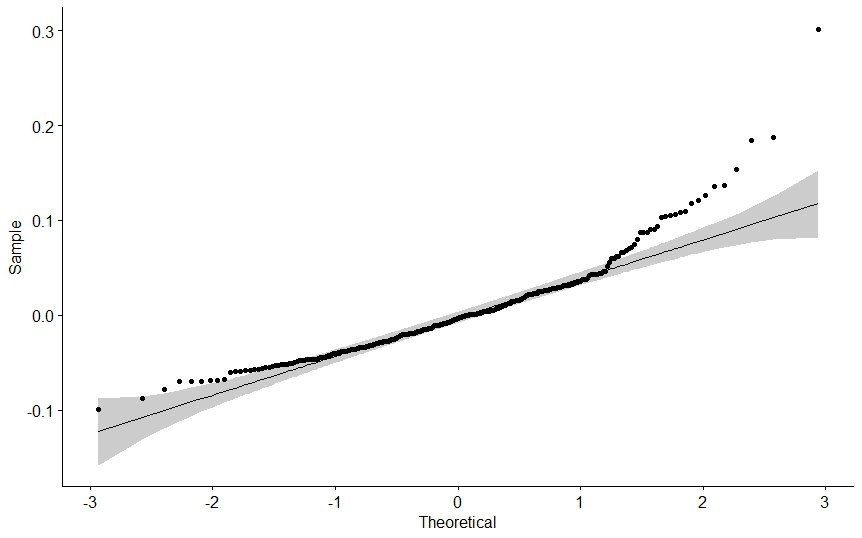}
  \caption{QQplot of  $\xi(300)$}
  \label{fig2a}
\end{subfigure}
\begin{subfigure}{0.45\textwidth}
  \centering
  \includegraphics[width=1\linewidth,height=6cm]{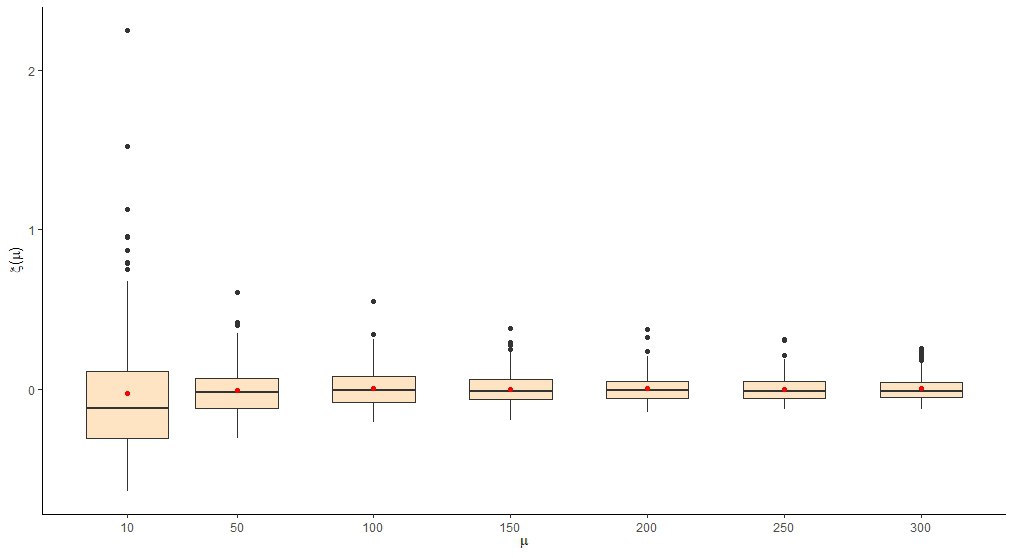}
  \caption{Boxplots of $\xi(\mu)$}
  \label{fig2b}
\end{subfigure}%
\caption{Empirical distributions of $\xi(\mu)$}
\label{fig2}
\end{figure}

\section{Conclusions and the future studies}~\label{sec5}
The SLLN for random fields with unboundedly increasing variances and covariance functions was obtained. The conditions of the obtained results allow to consider the case of  nonlinear transformations of long-range dependent random fields. The results were derived for a very general class of simply connected observation windows $\Delta.$

In the future studies, it would be interesting to obtain:

- Laws of Large Numbers with the complete convergence, see \citet*{hu2012complete}, for multidimensional functional data;

- Necessary and sufficient conditions for the SLLN for non-homogeneous and non-isotropic random fields \cite{dokl1975criteria};

- Rate of convergence in the SLLN, see \citet*{anh2019rate, Hu2020}.

\section*{Acknowledgements}
The authors would like to thank the anonymous reviewers for their suggestions that helped to improve the style of the paper.
\section*{Funding}
This research was supported under La Trobe University SEMS CaRE Grant "Asymptotic analysis for point and interval estimation in some statistical models". The research of the last listed author was partially funded by the subsidy allocated to Kazan Federal University for the state assignment in the sphere of scientific activities, project 1.13556.2019/ 13.1.
\bibliographystyle{chicago}
\bibliography{mybib_MN}
\end{document}